\documentclass[a4paper,12pt]{amsart}
\usepackage[T1]{fontenc}
\usepackage{amsmath,amsthm}

\makeatletter
\newcommand{\subjclassname@later}{\textup{2010} Mathematics Subject Classification}
\makeatother

\parskip3mm
\oddsidemargin -10 pt \evensidemargin 10 pt \marginparwidth 1 in
\oddsidemargin 0 in \evensidemargin 0 in \marginparwidth 0.75 in
\textwidth 6.375 true in

\newtheorem{theorem}{Theorem}[section]
\newtheorem{lemma}{Lemma}[section]
\newtheorem{corollary}{Corollary}[section]

\theoremstyle{definition}
\newtheorem{definition}{Definition}
\newtheorem{remark}{Remark}[section]

\numberwithin{equation}{section} \numberwithin{theorem}{section}

\DeclareMathOperator{\RE}{Re}

\title[Certain Close-to-Convex Functions]{Coefficient, Distortion  and Growth Inequalities for Certain Close-to-Convex Functions}

\author[N. E. Cho]{Nak Eun Cho}
\address{Department of Applied Mathematics\\ Pukyong National University, Busan 608-737, South Korea}
\email{necho@pknu.ac.kr}

\author[O. S. Kwon]{Oh Sang Kwon}
\address{Department of Mathematics\\
Kyungsung University, Busan 608-736, South
Korea}\email{oskwon@ks.ac.kr}

\author[V. Ravichandran]{V. Ravichandran}
\address{Department of Mathematics\\ University of Delhi,
Delhi-110007, India} \email{vravi@maths.du.ac.in}

\thanks{This work was completed during the  third author's visit to
Pukyong National University and the support and the hospitality of
Prof. Cho is gratefully acknowledged.}
\begin{document}
\begin{abstract}
In the present investigation, certain subclasses of close-to-convex
functions are investigated. In particular, we obtain an estimate for
the Fekete-Szeg\"{o} functional for functions belonging to the
class, distortion,  growth estimates and covering theorems.
\end{abstract}
\maketitle

\section{Introduction}Let $\mathbb{D}:=\{z\in\mathbb{C}:|z|<1\}$ be
the open unit disk in the complex plane $\mathbb{C}$. Let
$\mathcal{A}$ be the class of analytic functions defined on
$\mathbb{D}$ and normalized by the conditions $f(0)=0$ and
$f'(0)=1$. Let $\mathcal{S}$ be the subclass of $\mathcal{A}$ consisting of
univalent functions. Sakaguchi \cite{saka}  introduced a class of functions
called  starlike functions with respect to symmetric points; they
are the functions $f\in\mathcal{A}$  satisfying the condition
\[\RE \frac{zf'(z)}{f(z)-f(-z)} > 0 .\]  These functions are
close-to-convex functions. This can be easily seen by showing that
the function  $(f(z)-f(-z))/2$  is a starlike function in
$\mathbb{D}$. Motivated by the class of  starlike functions with
respect to symmetric points,  Gao and Zhou \cite{gao2} discussed a
class $\mathcal{K}_s$ of close-to-convex univalent functions.  A
function $f\in \mathcal{K}_s$ if it satisfy  the following
inequality \[ \RE\left( \frac{ z^2f'(z)}{ g(z)g(-z)}\right) < 0
\quad (z \in \mathbb{D})\] for some function
 $ g\in  S^*( 1 /2 )$.
 The idea here is to replace the average of $f(z)$ and $-f(-z)$ by the
 corresponding product $-g(z)g(-z)$ and the factor $z$ is included
 to normalize the expression  so that $-z^2f'(z)/(g(z)g(-z))$ takes
 the value 1 at $z=0$. To make the functions  univalent, it is
 further assumed that $g$ is starlike  of order 1/2 so that  the function
 $-g(z)g(-z)/z$ is starlike which in turn implies the
 close-to-convexity of $f$. For some recent works on the  problem, see
 \cite{gao1,gao3,gao4,xu}. In stead of requiring the quantity
 $-z^2f'(z)/(g(z)g(-z))$ to lie in the right-half plane, we can
 consider more general regions. This  could be done via
 subordination between analytic functions.

Let $f$ and $g$ be analytic in $\mathbb{D}$. Then $f$ is subordinate
to $g$, written $f\prec g$ or $f(z)\prec g(z)$ $(z\in \mathbb{D})$,
if there is an analytic function $w(z)$, with $w(0)=0$ and
$|w(z)|<1$, such that $f(z)=g(w(z))$. In particular, if $g$ is
univalent in $\mathbb{D}$, then $f$ is subordinate to $g$, if
$f(0)=g(0)$ and $f(\mathbb{D})\subseteq g(\mathbb{D})$. In terms of
subordination, a general class $\mathcal{K}_s(\varphi)$ is
introduced in  the following  definition.

\begin{definition}\cite{gao1}
For a function $\varphi$ with positive real part, the class
$\mathcal{K}_s(\varphi)$ consists of functions $f\in \mathcal{A}$
satisfying
\begin{equation}\label{def1}  - \frac{ z^2f'(z)}{ g(z)g(-z)}\prec \varphi(z) \quad (z \in
\mathbb{D})\end{equation} for some function  $ g\in  S^*( 1 /2 )$.
\end{definition}

This class  was introduced by  Wang, Gao  and Yuan \cite{gao1}. A
special subclass   $\mathcal{K}_s(\gamma):=\mathcal{K}_s(\varphi)$
where $\varphi(z):=(1+(1-2\gamma)z)(1-z) $, $0\leq \gamma<1$, was
recently investigated by Kowalczyk and    Le\'s-Bomba \cite{kow}.
They proved the  sharp distortion and growth estimates for functions
in $\mathcal{K}_s(\gamma)$ as well as some sufficient conditions in
terms of the coefficient for function to be in this class
$\mathcal{K}_s(\gamma)$.

In the present investigation,  we obtain a sharp estimate for the
Fekete-Szeg\"{o} functional for functions belonging to the class
$\mathcal{K}_s(\varphi)$. In addition, we also investigate the
corresponding problem for  the inverse functions for functions
belonging to the  class  $\mathcal{K}_s(\varphi)$.  Also distortion,
growth estimates as well as covering theorem are derived. Some
connection with earlier works are also indicated.

\section{Fekete-Szeg\"{o} Inequality} In this section, we assume that the
function $\varphi(z)$ is an univalent analytic function with positive
real part that maps the unit disk $\mathbb{D}$ onto a starlike
region which symmetric with respect to real axis and is normalized
by $\varphi(0)=1$ and $\varphi'(0)>0$. In such case, the function
$\varphi$ has an expansion of the form
$\varphi(z)=1+B_1z+B_2z^2+\cdots$, $B_1>0$.

\begin{theorem}[Fekete-Szeg\"{o} Inequality]\label{th1}
For a function $f(z)=z+a_2z^2+a_3z^3+\cdots$ belonging to the class
$\mathcal{K}_s(\varphi)$, the following sharp estimate holds:
\[  |a_3-\mu a_2^2|\leq 1/3+\max(B_1/3, |B_2/3-\mu B_1^2/4|)\quad (\mu\in\mathbb{C}). \]
\end{theorem}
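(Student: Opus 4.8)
The plan is to strip the definition of $\mathcal{K}_s(\varphi)$ down to a subordination between $zf'(z)$ and an odd starlike function, then read off $a_2,a_3$ by comparing power series, and finally apply two standard inequalities (the coefficient bounds for Carath\'eodory functions and for Schwarz functions). Concretely, given $f\in\mathcal{K}_s(\varphi)$, fix $g\in S^*(1/2)$ as in \eqref{def1} and set
\[ g_1(z):=-\frac{g(z)g(-z)}{z}. \]
Then $g_1$ is analytic and odd, $g_1(z)=z+c_3z^3+c_5z^5+\cdots$, and logarithmic differentiation gives
\[ \frac{zg_1'(z)}{g_1(z)}=\frac{zg'(z)}{g(z)}+\frac{(-z)g'(-z)}{g(-z)}-1 . \]
Since $-z\in\mathbb{D}$ whenever $z\in\mathbb{D}$ and $g\in S^*(1/2)$, both fractions on the right have real part exceeding $1/2$, so $\RE\big(zg_1'(z)/g_1(z)\big)>0$; in particular $g_1$ is starlike. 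Writing $zg'(z)/g(z)=(1+p(z))/2$ with $p(z)=1+p_1z+p_2z^2+\cdots$ a function of positive real part, a short computation gives $c_3=p_2/2$, whence $|c_3|\le1$ from the classical bound $|p_2|\le2$.

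Next I would rewrite \eqref{def1} as $zf'(z)/g_1(z)\prec\varphi(z)$, so that there is a Schwarz function $w(z)=w_1z+w_2z^2+\cdots$ (satisfying $|w_1|\le1$ and $|w_2|\le1-|w_1|^2$) with
\[ zf'(z)=g_1(z)\,\varphi(w(z)). \]
Expanding $\varphi(w(z))=1+B_1w_1z+(B_1w_2+B_2w_1^2)z^2+\cdots$ and comparing the coefficients of $z^2$ and $z^3$ on the two sides yields
\[ a_2=\frac{B_1w_1}{2},\qquad 3a_3=B_1w_2+B_2w_1^2+c_3 . \]

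From these identities,
\[ a_3-\mu a_2^2=\frac{c_3}{3}+\frac{B_1}{3}\left(w_2+\Big(\frac{B_2}{B_1}-\frac{3\mu B_1}{4}\Big)w_1^2\right), \]
so by the triangle inequality and $|c_3|\le1$,
\[ |a_3-\mu a_2^2|\le\frac13+\frac{B_1}{3}\left|w_2+\Big(\frac{B_2}{B_1}-\frac{3\mu B_1}{4}\Big)w_1^2\right| . \]
Now I would invoke the elementary estimate $|w_2+vw_1^2|\le\max(1,|v|)$, valid for every $v\in\mathbb{C}$ and every Schwarz function $w$ (it follows immediately from $|w_2|\le1-|w_1|^2$), applied with $v=B_2/B_1-3\mu B_1/4$. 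This gives exactly
\[ |a_3-\mu a_2^2|\le\frac13+\max\left(\frac{B_1}{3},\ \left|\frac{B_2}{3}-\frac{\mu B_1^2}{4}\right|\right). \]

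For sharpness, the point is that $g$ (hence $c_3$) and $w$ can be chosen independently, so each term can be saturated. Choosing $g$ with $zg'(z)/g(z)=1/(1-\eta z^2)$ for a unimodular constant $\eta$ forces $|c_3|=1$; taking $w(z)=\eta'z^2$ (resp.\ $w(z)=\eta'z$) with a unimodular $\eta'$ saturates the second term at $B_1/3$ (resp.\ at $|B_2/3-\mu B_1^2/4|$), and the function $f$ recovered from $zf'(z)=g_1(z)\varphi(w(z))$ then attains the bound in each of the two cases. The only delicate part is the bookkeeping here: the unimodular factors $\eta,\eta'$ must be selected so that $c_3/3$ and the Schwarz part actually add up in modulus, which is precisely where the freedom to rotate both $g$ and $w$ is used; every other step is routine series manipulation together with the two standard inequalities $|p_2|\le2$ and $|w_2|\le1-|w_1|^2$.
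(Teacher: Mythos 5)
Your argument is correct and takes essentially the same route as the paper: the same decomposition $a_3-\mu a_2^2=c_3/3+(B_1/3)\bigl(w_2+(B_2/B_1-3\mu B_1/4)w_1^2\bigr)$ (your $c_3$ is the paper's $2g_3-g_2^2$), the same Keogh--Merkes estimate $|w_2+vw_1^2|\le\max(1,|v|)$, and the same bound $|c_3|\le 1$ for $g\in S^*(1/2)$, which you derive from $|p_2|\le 2$ rather than citing. Your sharpness discussion, rotating both $g$ and $w$ to align phases, is a slightly more careful version (for complex $\mu$) of the paper's fixed extremal functions $f_0(z)=\int_0^z\varphi(t)/(1-t^2)\,dt$ and $f_1(z)=\int_0^z\varphi(t^2)/(1-t^2)\,dt$.
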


\begin{proof}Since the function $f\in \mathcal{K}_s(\varphi)$, there is a
normalized  analytic  function $g\in S^*(1/2)$ such that
\[- \frac{ z^2f'(z)}{g(z)g(-z)}\prec \varphi(z).\]  By using the
definition of subordination between analytic function, we find a
function $w(z)$ analytic in $\mathbb{D}$, normalized by $w(0)=0$
satisfying $|w(z)|<1$ and
\begin{equation}\label{p1e1}
- \frac{ z^2f'(z)}{g(z)g(-z)}= \varphi(w(z)).
\end{equation}
By writing $w(z)=w_1 z+w_2 z^2+\cdots$, we see that
\begin{equation}\label{p1e2}
\varphi(w(z))= 1+B_1w_1  z+(B_1w_2+B_2 w_1^2)z^2+\cdots.
\end{equation}
Also by writing $g(z)=z+g_2z^2+g_3z^3+\cdots$,  a calculation shows
that
\[ -\frac{g(z)g(-z)}{z} = 1+(2g_3-g_2^2)z^3+\cdots\] and therefore
\[ -\frac{z}{g(z)g(-z)}  = 1-(2g_3-g_2^2)z^3+\cdots.\]
Using this and the Taylor's  expansion for $zf'(z)$, we get
\begin{equation}\label{p1e3}
- \frac{ z^2f'(z)}{g(z)g(-z)} = 1+2a_2z
+(3a_3-2g_3+g_2^2)z^2+\cdots.
\end{equation}
Using  \eqref{p1e1},  \eqref{p1e2} and \eqref{p1e3}, we see that
\begin{align*}
2a_2& = B_1w_1,\\
3a_3 &= 2g_3-g_2^2+B_1w_2+B_2w_1^2.
\end{align*}
This shows that
\[a_3-\mu a_2^2 = (2/3)(g_3-g_2^2/2)+(B_1/3)\left(w_2+(B_2/B_1-3\mu
B_1/4)w_1^2\right).\] By using the following  estimate
(\cite[inequality 7, p.\ 10]{keo}) \[ |w_2- t w_1^2| \leq  \max\{1;\
| t | \}  \quad (t\in\mathbb{C}) \] for an analytic  function $w$
with $w(0)=0$ and $|w(z)|<1$
 which is
sharp for the functions $w(z)=z^2$ or $w(z)=z$, the desired result
follows upon using the estimate that $|g_3-g_2^2/2|\leq 1/2$ for
analytic function $g(z)=z+g_2z^2+g_3z^3+\cdots$ which is starlike of
order 1/2.

Define the function $f_0$ by
\[f_0(z) = \int_0^z \frac{\varphi(w)}{1-w^2}dw. \] The function
clearly belongs to the class $\mathcal{K}_s(\varphi)$ with
$g(z)=z/(1-z)$. Since \[\frac{ \varphi(w)}{1-w^2} = 1+B_1
w+(B_2+1)w^2+\cdots,\] we have
\[ f_0(z)= z+ (B_1/2) z^2 +(1/3+B_2/3)z^3+\cdots.\]
Similarly, define $f_1$ by
\[f_1(z) = \int_0^z \frac{\varphi(w^2)}{1-w^2}dw. \]
Then
\[ f_1(z)= z+ (B_1/3+1/3)z^3+\cdots.\]
The functions $f_0$ and $f_1$ show that the results are  sharp.
\end{proof}

\begin{remark}
By setting $\mu=0$ in Theorem~\ref{th1}, we  get the sharp estimate
for the third coefficient of functions in $\mathcal{K}_s(\varphi)$:
\[  |a_3|\leq 1/3+(B_1/3)\max(1, |B_2|/B_1), \]
while the limiting case $\mu\rightarrow \infty$ gives the sharp
estimate $|a_2|\leq B_1/2$. In the special  case where
$\varphi(z)=(1+z)/(1-z)$, the results reduce to the corresponding
one in \cite[Theorem 2, p.\ 125]{gao2}.

Though Xu et al.\ \cite{xu}  have given an estimate of $|a_n|$ for all $n$, their result is not sharp in general. For 
$n=2,3$, our results provide sharp bounds. 
\end{remark}

It is known that every univalent function $f$ has an inverse
$f^{-1}$, defined by
\[f^{-1}(f(z))=z, \quad z\in \mathbb{D}\] and
\[f(f^{-1}(w))=w, \quad \left(|w|<r_0(f);
r_0(f)\geq\frac{1}{4}\right).\]

\begin{corollary}Let $f\in  \mathcal{K}_s(\varphi)$. Then the coefficients $d_2$ and $d_3$ of the
inverse  function $f^{-1}(w)=w+d_2w^2+d_3w^3+\cdots$ satisfy the
inequality
\[  |d_3-\mu d_2^2|\leq 1/3+\max(B_1/3, |B_2/3-(2-\mu) B_1^2/4|)\quad (\mu\in\mathbb{C}). \]
\end{corollary}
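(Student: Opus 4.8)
The plan is to reduce the corollary directly to Theorem~\ref{th1} via the classical power‑series inversion formulae. First I would substitute $f^{-1}(w)=w+d_2w^2+d_3w^3+\cdots$ into the identity $f(f^{-1}(w))=w$, using $f(z)=z+a_2z^2+a_3z^3+\cdots$, and compare the coefficients of $w^2$ and $w^3$. This is a short and routine computation that yields the well‑known relations
\[ d_2=-a_2,\qquad d_3=2a_2^2-a_3. \]

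Next I would use these relations to rewrite the target functional: for any $\mu\in\mathbb{C}$,
\[ d_3-\mu d_2^2 = 2a_2^2-a_3-\mu a_2^2 = -\bigl(a_3-(2-\mu)a_2^2\bigr), \]
so that $|d_3-\mu d_2^2|=|a_3-(2-\mu)a_2^2|$. Since $\mu$ ranges over all of $\mathbb{C}$, so does $2-\mu$, and therefore Theorem~\ref{th1} may be applied with $\mu$ replaced by $2-\mu$. This gives at once
\[ |d_3-\mu d_2^2|\leq 1/3+\max\bigl(B_1/3,\ |B_2/3-(2-\mu)B_1^2/4|\bigr), \]
which is precisely the asserted inequality.

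For sharpness I would not recompute anything, but instead invoke the extremal functions $f_0$ and $f_1$ already constructed in the proof of Theorem~\ref{th1}: one of them makes $|a_3-(2-\mu)a_2^2|$ attain the bound above, and by the inversion identity its inverse makes $|d_3-\mu d_2^2|$ attain that same bound. Since $f_0,f_1\in\mathcal{K}_s(\varphi)\subseteq\mathcal{S}$, their inverses are well defined near the origin, so this transfer of extremal examples is legitimate.

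I do not anticipate a real obstacle here; the only point needing a little care is the bookkeeping — checking that the $\max(B_1/3,\cdot)$ term matches verbatim after the substitution $\mu\mapsto 2-\mu$ (it does), and confirming that the extremal functions carry over under inversion rather than having to be reconstructed from scratch.
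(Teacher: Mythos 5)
Your proposal is correct and follows essentially the same route as the paper: derive $d_3=2a_2^2-a_3$ from series inversion, observe $|d_3-\mu d_2^2|=|a_3-(2-\mu)a_2^2|$, and apply Theorem~\ref{th1} with $\mu$ replaced by $2-\mu$. (Your sign $d_2=-a_2$ is actually the correct one, whereas the paper writes $d_2=a_2$; the discrepancy is immaterial since only $d_2^2$ enters.)
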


\begin{proof} A calculation shows that the inverse function $f^{-1}$
has the following Taylor's series expansion:
\[f^{-1}(w)=w+a_2w^2+(2a_2^2-a_3)w^3+\cdots.\]
From this expansion, it follows that $d_2=a_2$ and $d_3=2a_2^2-a_3$
and hence
\[ |d_3-\mu d_2^2|= |a_3-(2-\mu)a_2^2|. \] Our result follows at
once from this identity and Theorem~\ref{th1}.
\end{proof}

\section{Distortion and growth theorems}
The second coefficient of univalent function plays an important role
in the theory of univalent function; for example, this leads to  the
distortion and growth estimates for univalent functions as well as
the rotation theorem. In the next theorem, we derive the distortion
and growth estimates for the functions in the class
$\mathcal{K}_s(\varphi)$. In particular, if we let $r\rightarrow 1-$
in the growth estimate, it gives the bound $|a_2|\leq B_1/2$ for the
second coefficient of functions in $\mathcal{K}_s(\varphi)$.

\begin{theorem}Let $\varphi$ be an analytic univalent functions with
positive real part and  \[ \phi(-r)=\min_{|z|=r<1} |\phi(z)|, \quad
 \phi(r)=\max_{|z|=r<1} |\phi(z)|.\] If $f\in \mathcal{K}_s(\varphi)$, the
 following sharp inequalities holds:
 \[\frac{\varphi(-r)}{1+r^2} \leq |f'(z)| \leq  \frac{\varphi(r)}{1-r^2}\quad (|z|=r<1),\]
  \[\int_0^r\frac{\varphi(-t)}{1+t^2} dt\leq |f(z)|
  \leq \int_0^r \frac{\varphi(t)}{1-t^2}dt\quad (|z|=r<1).\]
\end{theorem}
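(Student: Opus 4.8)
The plan is to reduce the statement to the classical distortion and growth theorems for \emph{odd} starlike functions. Given $f\in\mathcal{K}_s(\varphi)$, write, exactly as in the proof of Theorem~\ref{th1}, $-z^2f'(z)/(g(z)g(-z))=\varphi(w(z))$ for some $g\in S^*(1/2)$ and some Schwarz function $w$ (that is, $w$ analytic, $w(0)=0$, $|w(z)|<1$). The first — and the key — step is to observe that the auxiliary function $G(z):=-g(z)g(-z)/z$ is a normalized odd starlike function. Normalization $G(0)=0$, $G'(0)=1$ is immediate, oddness follows from $G(-z)=-g(-z)g(z)/(-z)=-G(z)$, and starlikeness follows from the identity $zG'(z)/G(z)=zg'(z)/g(z)+(-z)g'(-z)/g(-z)-1$ together with $\RE\bigl(zg'(z)/g(z)\bigr)>1/2$ and $\RE\bigl((-z)g'(-z)/g(-z)\bigr)>1/2$ for $z\in\mathbb{D}$, which give $\RE\bigl(zG'(z)/G(z)\bigr)>0$.

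Next I would record the sharp two-sided bound for odd starlike functions: if $G(z)=z+c_3z^3+\cdots$ is starlike, then $F(w):=G(\sqrt w\,)^2$ is a well-defined normalized starlike function with $wF'(w)/F(w)=zG'(z)/G(z)$ when $w=z^2$, so the ordinary growth theorem for starlike functions applied to $F$ yields $r/(1+r^2)\le|G(z)|\le r/(1-r^2)$ for $|z|=r<1$; equivalently $r^2/(1+r^2)\le|g(z)g(-z)|\le r^2/(1-r^2)$. On the other hand, since $\RE\varphi>0$ the function $\varphi$ is zero-free on $\mathbb{D}$, so both $|\varphi|$ and $1/|\varphi|$ satisfy the maximum principle, and with $|w(z)|\le|z|=r$ (Schwarz's lemma) this gives $\varphi(-r)\le|\varphi(w(z))|\le\varphi(r)$ in the notation of the statement. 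Since $|f'(z)|=|g(z)g(-z)|\,|\varphi(w(z))|/r^2$, multiplying the two estimates yields the distortion inequalities.

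For the growth inequalities, the upper bound follows by integrating the distortion estimate along the segment from $0$ to $z$: $|f(z)|\le\int_0^r|f'(te^{i\arg z})|\,dt\le\int_0^r\varphi(t)/(1-t^2)\,dt$. For the lower bound I would use the standard preimage argument: with $|z|=r$, let $\Gamma$ be the $f$-preimage of the rectilinear segment $[0,f(z)]$, a curve joining $0$ to $z$ (this is where univalence of $f$ enters); then $|f(z)|=\int_\Gamma|f'(\zeta)|\,|d\zeta|\ge\int_\Gamma\varphi(-|\zeta|)/(1+|\zeta|^2)\,|d\zeta|\ge\int_0^r\varphi(-t)/(1+t^2)\,dt$, the final inequality because $\Gamma$ meets every circle $|\zeta|=t$, $0<t<r$. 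Finally, sharpness is exhibited by the functions $f_0(z)=\int_0^z\varphi(t)/(1-t^2)\,dt$ (arising from $g(z)=z/(1-z)$ and $w(z)=z$) for the upper estimates and $f_1(z)=\int_0^z\varphi(-t)/(1+t^2)\,dt$ (arising from $g(z)=z/(1-iz)$ and $w(z)=-z$) for the lower ones, both of which one checks directly to lie in $\mathcal{K}_s(\varphi)$.

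The essential point, and the only place that requires an idea rather than routine computation, is the identification of $-g(z)g(-z)/z$ as an odd starlike function; this is exactly what produces the sharp denominators $1\pm r^2$, whereas treating $-g(z)g(-z)/z$ merely as a starlike function would only give the weaker $1\pm r$. A minor technical care point is that the lower growth estimate genuinely needs the preimage-of-segment device, since a naive radial integration of $\min_{|\zeta|=t}|f'(\zeta)|$ does not, a priori, bound $|f(z)|$ from below.
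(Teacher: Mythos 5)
Your proof is correct and follows essentially the same route as the paper: factor $zf'(z)=G(z)\varphi(w(z))$ with $G(z)=-g(z)g(-z)/z$ odd starlike, apply the growth bounds $r/(1+r^2)\le|G(z)|\le r/(1-r^2)$ together with the maximum/minimum modulus principle for the zero-free function $\varphi$, and integrate. The only differences are cosmetic: you prove by hand (via the preimage-of-a-segment argument) what the paper cites as Privalov's theorem for passing from distortion to growth, and your lower-bound extremal function $f_1$ uses $g(z)=z/(1-iz)$ and $w(z)=-z$ in place of the paper's $g(z)=z/\sqrt{1+z^2}$ and $w(z)=z$; both choices yield $-g(z)g(-z)/z=z/(1+z^2)$ and are equally valid.
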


\begin{proof}Since the function $f\in \mathcal{K}_s(\varphi)$, there is a
normalized  analytic  function $g\in S^*(1/2)$ such that
\begin{equation}\label{p2e1} - \frac{ z^2f'(z)}{g(z)g(-z)}\prec \varphi(z).\end{equation} Define
the function $G:\mathbb{D}\rightarrow \mathbb{C}$ by the equation
\[ G(z):= -\frac{g(z)g(-z)}{z}. \] Then it is clear that $G$ is odd
starlike function in $\mathbb{D}$ and therefore
\[ \frac{r}{1+r^2} \leq |G(z)| \leq  \frac{r}{1-r^2}\quad (|z|=r<1) \]
Using the definition of subordination between analytic function, and
the equation \eqref{p2e1}, we see that there is an analytic function
$w(z)$ with $|w(z)|\leq |z|$ such that
\[\frac{zf'(z)}{G(z)}=\varphi(w(z))\]
or $zf'(z)=G(z)\varphi(w(z))$. Since $w(\mathbb{D})\subset
\mathbb{D}$, we have, by maximum principle for harmonic functions,
\[ |f'(z)| \leq \frac{|G(z)|}{|z|} |\varphi(w(z))| \leq
\frac{1}{1-r^2}\max_{|z|=r}|\varphi(z)|  = \frac{\varphi(r)}{1-r^2}.
\]  The other inequality for $|f'(z)|$ is similar. Since the function $f$ is
univalent, the inequality for $|f(z)|$ follows from the
corresponding inequalities for $|f'(z)|$ by Privalov's Theorem
\cite[Theorem 7, p.\ 67]{good1}.

To prove the sharpness of our  results, we consider  the functions
\begin{equation}\label{sharp}  f_0(z)= \int_0^z \frac{\varphi(w)}{1- w^2} dw,\quad
 f_1(z)= \int_0^z \frac{\varphi(w)}{1+ w^2} dw . \end{equation}
Define the function $g_0$ and $g_1$ by  $g_0(z)=z/(1-z)$ and
$g_1(z)=z/\sqrt{1+z^2}$. These functions are clearly starlike
functions of order 1/2. Also a calculations shows that
\[ -\frac{z^2f_k'(z)}{g_k(z)g_k(-z)} = \varphi(z) \quad (k=0,1).  \]
Thus  the function $f_0$ satisfies the subordination \eqref{def1}
with $g_0$  while the function $f_1$ satisfies it with $g_1$;
therefore, these functions belong to the class
$\mathcal{K}_s(\varphi)$. It is clear that the upper estimates for
$|f'(z)|$ and $|f(z)|$ are sharp for the function $f_0$ given in
\eqref{sharp} while the lower estimates  are sharp for $f_1$ given
in \eqref{sharp}.
\end{proof}

\begin{remark}
We  note that Xu et al.\ \cite{xu} also obtained a similar estimates and 
our results differ from their in the  hypothesis. Also we have shown that the 
results are sharp.  Our hypothesis is same as the one 
assumed by Ma and Minda \cite{mamin}. 
\end{remark} 

\begin{remark}For the choice $\varphi(z)=(1+z)/(1-z)$, our result
reduces to \cite[Theorem 3, p. 126]{gao2} while, for the choice
 $\varphi(z)=(1+(1-2\gamma)z)/(1-z)$, it reduces to following  estimates (obtained in \cite[Theorem
 4,  p. \ 1151]{kow}) for $f\in \mathcal{K}_s(\gamma)$:
\[ \frac{1-(1-2\gamma)r}{(1+r)(1+r^2)} \leq |f'(z)| \leq \frac{1+(1-2\gamma)r}{(1-r)(1-r^2)} \]
and
\[ (1-\gamma) \ln \frac{1+r}{\sqrt{1+r^2}}+\gamma  \arctan r \leq |f(z)| \leq
\frac{\gamma}{2}\ln\frac{1+r}{1-r}+(1-\gamma)\frac{r}{1-r}\] where
$|z|=r<1$.  Also our result improves the corresponding
 results in \cite{gao1}.
\end{remark}

\begin{remark}Let $k: = \lim_{r\rightarrow 1-}\int_0^r\varphi(-t)/(1+t^2) dt$. Then
the disk $\{ w\in\mathbb{C}: |w|\leq k\} \subseteq f(\mathbb{D})$
for every $f\in \mathcal{K}_s(\varphi)$.
\end{remark}

\section{A Subordination Theorem}

It is well-known \cite{stan}  that $f$ is starlike if  $(1-t)f(z)\prec f(z)$ for $t\in(0, \delta)$, where $\delta$ is a positive real number; also the function
is starlike with respect to symmetric points if $(1-t)f(z)+tf(-z)\prec f(z)$. In the following theorem, we extend these results to the class $\mathcal{K}_s$.
The proof of our result is based on  the following  version of a lemma of Stankiewicz \cite{stan}.

\begin{lemma}\label{lemstan}
Let $F(z,t)$ be analytic in $\mathbb{D}$ for each $t\in (0,\delta)$, $F(z,0)=f(z)$, $f\in\mathcal{S}$ and
$F(0,t)=0 $ for each  $t\in (0,\delta)$. Suppose that $F(z,t)\prec f(z)$ and that
\[ \lim_{t\rightarrow 0^+} \frac{F(z,t)-f(z)}{zt^\rho}=F(z) \]
exists for some $\rho >0$. If $F$ is analytic and $\RE(F(z))\neq 0$, then
\[ \RE\left( \frac{F(z)}{f'(z)} \right)<0. \]
\end{lemma}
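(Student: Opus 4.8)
The plan is to translate the hypothesis on $F(z,t)$ into a statement about the associated Schwarz functions, read off the sign of $\RE(F/f')$ from the Schwarz lemma, and then upgrade the resulting non-strict inequality to a strict one by the maximum principle. Since $f\in\mathcal S$ is univalent, $f^{-1}$ is analytic on $f(\mathbb D)$, and because $F(\cdot,t)\prec f$ we have $F(\mathbb D,t)\subseteq f(\mathbb D)$; hence $w(z,t):=f^{-1}(F(z,t))$ is analytic in $z$, satisfies $w(0,t)=f^{-1}(0)=0$, and maps $\mathbb D$ into $\mathbb D$, so by the Schwarz lemma $|w(z,t)|\le |z|$. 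As $t\to 0^{+}$ we have $f(w(z,t))=F(z,t)\to f(z)$, and continuity of $f^{-1}$ forces $w(z,t)\to z$. Therefore, for fixed $z\in\mathbb D\setminus\{0\}$,
\[
\frac{w(z,t)-z}{z\,t^{\rho}}=\frac{F(z,t)-f(z)}{z\,t^{\rho}}\cdot\frac{w(z,t)-z}{f(w(z,t))-f(z)}\longrightarrow \frac{F(z)}{f'(z)}\qquad (t\to 0^{+}),
\]
the first factor tending to $F(z)$ by hypothesis and the second to $1/f'(z)$ (note $f'$ is zero-free since $f$ is univalent). So $p(z):=\lim_{t\to 0^{+}}\frac{w(z,t)-z}{z\,t^{\rho}}$ exists with $p=F/f'$, and $p$ is analytic on $\mathbb D$ because $F$ is analytic and $f'$ is analytic and nonvanishing.

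Next I would estimate $\RE p$. Writing $\zeta=\zeta(z,t):=w(z,t)/z$, the bound $|\zeta|\le 1$ gives $|1-\zeta|^{2}=1-2\RE\zeta+|\zeta|^{2}\le 2\,\RE(1-\zeta)$, hence $\RE(1-\zeta)\ge \frac{1}{2}|1-\zeta|^{2}\ge 0$. Since $t^{\rho}>0$, it follows that
\[
\RE\!\left(\frac{w(z,t)-z}{z\,t^{\rho}}\right)=-\frac{\RE(1-\zeta)}{t^{\rho}}\le 0,
\]
and letting $t\to 0^{+}$ yields $\RE\big(F(z)/f'(z)\big)=\RE p(z)\le 0$ for every $z\in\mathbb D$ (the value at $z=0$ following by continuity).

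It remains to rule out equality. If $\RE p(z_{0})=0$ for some $z_{0}\in\mathbb D$, then the harmonic function $\RE p$ attains its maximum value $0$ at an interior point, so by the maximum principle $\RE p\equiv 0$; thus $p$ is a purely imaginary constant $ic$, $c\in\mathbb R$, and $F=ic\,f'$. But then $F(0)=ic\,f'(0)=ic$, so $\RE F(0)=0$, contradicting $\RE F(z)\ne 0$ (and if $c=0$ then $F\equiv 0$, again contradicting the hypothesis). Hence $\RE\big(F(z)/f'(z)\big)<0$ throughout $\mathbb D$, as claimed.

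I expect the only genuinely delicate point to be the first paragraph — making precise that the difference quotients of the Schwarz functions $w(\cdot,t)$ converge, with limit $F/f'$, and that this limit is analytic — since the elementary inequality $\RE(1-\zeta)\ge\frac{1}{2}|1-\zeta|^{2}$ and the maximum-principle step are routine once that is established.
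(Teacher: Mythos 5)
The paper gives no proof of this lemma at all: it is quoted as a known result, ``a version of a lemma of Stankiewicz'' \cite{stan}, so there is no internal argument to compare yours against. Your proof is correct and is essentially the classical argument for results of this type: represent the subordination by Schwarz functions $w(\cdot,t)=f^{-1}(F(\cdot,t))$, use $|w(z,t)|\le |z|$ to get $\RE\bigl((w(z,t)-z)/(zt^{\rho})\bigr)\le 0$, pass to the limit to obtain $\RE(F/f')\le 0$, and upgrade to a strict inequality via the maximum principle. Two hairline points deserve a sentence each in a write-up: in the factorization of the difference quotient you should dispose separately of parameter values $t$ with $w(z,t)=z$ (there both quotients vanish, and the existence of the hypothesized limit forces $F(z)=0$ at such $z$, so the identity $p=F/f'$ survives); and your final contradiction reads the hypothesis $\RE F(z)\neq 0$ pointwise and evaluates at $z=0$, where $F=icf'$ forces $\RE F(0)=\RE(ic)=0$ --- which is the natural reading, and the one under which the lemma as stated actually delivers strictness.
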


\begin{theorem}Let $f\in  \mathcal{S}$ and   $g\in \mathcal{S}^*(1/2)$. Let $\delta>0$ and $f(z)+tg(z)g(-z)/z \prec f(z)$, $t\in (0, \delta)$.  Then $f\in \mathcal{K}_s$.
\end{theorem}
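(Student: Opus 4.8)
The plan is to apply Lemma~\ref{lemstan} to the one-parameter family
\[ F(z,t):=f(z)+t\,\frac{g(z)g(-z)}{z}\qquad (t\in(0,\delta)). \]
First I would verify the structural hypotheses. Since $g(z)=z+g_2z^2+\cdots$, the product $g(z)g(-z)=-z^2+\cdots$ is an even function vanishing to second order at the origin, so $g(z)g(-z)/z=-z+\cdots$ is analytic on $\mathbb{D}$ and vanishes at $0$. Hence $F(\cdot,t)$ is analytic on $\mathbb{D}$, $F(0,t)=0$ for every $t$, and $F(z,0)=f(z)$ with $f\in\mathcal{S}$; the required subordination $F(z,t)\prec f(z)$ for $t\in(0,\delta)$ is exactly the hypothesis of the theorem.

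Next I would compute the limit in the lemma with $\rho=1$. Since $F(z,t)-f(z)=t\,g(z)g(-z)/z$, the difference quotient $(F(z,t)-f(z))/(zt)$ is identically equal to $g(z)g(-z)/z^2$, so
\[ \lim_{t\to0^+}\frac{F(z,t)-f(z)}{zt}=\frac{g(z)g(-z)}{z^2}=:F(z), \]
which is analytic on $\mathbb{D}$ with $F(0)=-1$. To invoke the lemma I must check $\RE F(z)\neq0$ throughout $\mathbb{D}$. Put $G(z):=-g(z)g(-z)/z$; because $g\in\mathcal{S}^*(1/2)$, the function $G$ is a normalized \emph{odd} starlike function in $\mathbb{D}$ (this is precisely the normalization discussed in the introduction and used in the proof of the distortion theorem). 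For an odd starlike $G$ one has $\RE(G(z)/z)>0$ on $\mathbb{D}$: via the square-root transform $G(z)/z=\sqrt{\psi(z^2)/z^2}$ for some starlike $\psi$, and the Herglotz representation of $\psi$ gives $|\arg(\psi(\zeta)/\zeta)|<\pi$, so the continuous branch of the argument of the square root lies in $(-\pi/2,\pi/2)$. Therefore $\RE F(z)=-\RE(G(z)/z)<0$ on $\mathbb{D}$, and in particular $\RE F(z)\neq0$.

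All hypotheses of Lemma~\ref{lemstan} now hold, so it yields $\RE\bigl(F(z)/f'(z)\bigr)<0$, that is,
\[ \RE\left(\frac{g(z)g(-z)}{z^2f'(z)}\right)<0\qquad(z\in\mathbb{D}). \]
Since $f$ is univalent we have $f'(z)\neq0$, and $g(z)g(-z)/z^2$ is non-vanishing, so this is a nonzero complex number; replacing it by its reciprocal preserves the sign of the real part, giving $\RE\bigl(z^2f'(z)/(g(z)g(-z))\bigr)<0$. This is exactly the defining inequality for $\mathcal{K}_s$ with the starlike-of-order-$1/2$ function $g$, hence $f\in\mathcal{K}_s$.

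I expect the only genuinely delicate point to be the verification $\RE F(z)\neq0$, i.e.\ the positivity of $\RE(G(z)/z)$ for odd starlike $G$; the rest is bookkeeping to match the hypotheses of Lemma~\ref{lemstan}, together with the elementary fact that inversion preserves the sign of the real part. If one prefers to bypass the square-root transform, the same conclusion follows from the known implication $g\in\mathcal{S}^*(1/2)\Rightarrow\RE(g(z)/z)>1/2$ applied to the factorization $g(z)g(-z)/z^2=-(g(z)/z)(g(-z)/(-z))$, using the log-convexity of the half-plane $\{\RE w>1/2\}$.
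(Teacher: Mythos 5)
Your proof follows the paper's argument exactly: the same family $F(z,t)=f(z)+tg(z)g(-z)/z$, the same application of Lemma~\ref{lemstan} with $\rho=1$, and the same reciprocal step at the end; in fact you are more careful than the paper, which never verifies the hypothesis $\RE(F(z))\neq 0$ and whose displayed limit has a typo ($z$ where $z^2$ is meant), both points you handle correctly via the odd starlike function $G(z)=-g(z)g(-z)/z$ and the square-root transform. One caveat: your closing ``alternative'' is unsound as stated, because the half-plane $\{\RE w>1/2\}$ is not closed under multiplication into $\{\RE w>0\}$ (e.g.\ $(1/2+10i)^2$ has negative real part), so the square-root argument should be retained as the actual justification.
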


\begin{proof} Define the function $F$ by $F(z,t)=f(z)+tg(z)g(-z)/z$. Then $F(z,t)$ is analytic for every fixed $t$ and $F(z,0)=f(z)$ and by our assumption, $f\in \mathcal{S}$. Also
\[ \lim_{t\rightarrow 0^+} \frac{F(z,t)-f(z)}{z}=\frac{g(z)g(-z)}{z}:=F(z). \]
The function $F$ is analytic in $\mathbb{D}$ (of course, one has to redefine the function $F$ at $z=0$ where it has removable singularity.)
Since all the hypothesis of Lemma~\ref{lemstan} are satisfied, we have
\[ \RE\left( \frac{g(z)g(-z)}{z^2f'(z)}\right) <0 .\]
Since a function $p(z)$ has negative real part if and only if its reciprocal $1/p(z)$ has negative real part,
we have
\[ \RE\left( \frac{z^2f'(z)}{g(z)g(-z)}\right) < 0 .\]
Thus, $f\in \mathcal{K}_s$.
\end{proof}

\end{document}